\theoremstyle{plain}
\newtheorem{thm}{Theorem}[section]
\newtheorem{lem}[thm]{Lemma}
\newtheorem{prop}[thm]{Proposition}
\newtheorem{cor}[thm]{Corollary}
\theoremstyle{definition}
\newtheorem*{Ack}{Acknowledgement}
\newtheorem{dfn}[thm]{Definition}
\newtheorem{rem}[thm]{Remark}
\theoremstyle{remark}
\def\k{\ensuremath{k}}
\newcommand*{\e}{\ensuremath{\varepsilon}}
\newcommand*{\field}{\ensuremath{\bold{k}}}
\newcommand*{\gr}{\ensuremath{\text{\upshape gr}}}
\def\dim{\operatorname{dim}}
\def\Chara{\operatorname{char}}
\begin{document}
\thispagestyle{empty}

\title[Indicators in positive characteristic]{Indicators of Hopf algebras in positive characteristic}

\author{Linhong Wang}
\author{Xingting Wang}

\address{Department of Mathematics\\
University of Pittsburgh, Pittsburgh, PA 15260}
\email{lhwang@pitt.edu}

\address{Department of Mathematics\\
Temple University,
Philadelphia, PA 19122}
\email{xingting@temple.edu}

\thanks{Research of the second author is supported by the AMS-Simons Travel Grant}

\keywords{Hopf algebras; indicator; positive characteristic}

\subjclass[2010]{16T05, 17B60}

\begin{abstract}
The notion of $n$-th indicator for a finite-dimensional Hopf algebra was introduced by Kashina, Montgomery and Ng as gauge invariance of the monoidal category of its representations. The properties of these indicators were further investigated by Shimizu. In this short note, we show that the indicators appearing in positive characteristic all share the same sequence pattern if we assume the coradical of the Hopf algebra is a local Hopf subalgebra.
\end{abstract}

\maketitle

\section{Introduction}
%In general, the monoidal structure of the representation category of a Hopf algebra can be used to define some quantum invariants for knots, links or three-manifolds \cite{Tu}. For two finite-dimensional Hopf algebras, their representation categories are monoidally equivalent (or gauge equivalent) if and only if one Hopf algebra can be obtained from the other one through some gauge transformation. As a consequence, gauge invariants of finite-dimensional Hopf algebras distinguish various monoidal structures with respect to all gauge transformations.

Let $G$ be a finite group. Classically, the criterion to determine the type of an irreducible representation of $G$ as real, complex or quaternionic is to find the value of Frobenius-Schur indicator of the representation as $1$, $0$, or $-1$, respectively. Let $H$ be a semisimple Hopf algebra over an algebraically closed field of characteristic zero, and let $V$ be an irreducible representation of $H$. Linchenko and Montgomery in \cite{LinMon} generalized this classical group theoretic result to $H$, that is, the possible values $1$, $0$, or $-1$ of the second Frobenius-Schur indicator $\nu_2(V)$ provide certain information of the irreducible representation $V$ of $H$.  In \cite{KSZ2}, Kashina-Sommerh\"auser-Zhu further provided a sequence of higher Frobenius-Schur indicators $\nu_n(V)$ for $n \geq 2$. 

In \cite{KMN}, Kashina-Montgomery-Ng proposed a definition of higher indicators for a nonsemisimple finite-dimensional Hopf algebra $H$ using its regular representation and proved that these indicators are gauge invariant with respect to all gauge transformations, that is, $\nu_n(H)=\nu_n(H^F)$ up to a Drinfeld twist given by a $2$-cocycle gauge transformation $F$.

The definition of indicators of the regular representation of a finite-dimensional Hopf algebra is straightforward by taking the trace of the Sweedler powers followed by the antipode; see \cite[Definition 2.1]{KMN}. We notice that the definition of indicators is well-behaved in positive characteristic and that many of their properties remain valid including gauge invariance. For example, if $H=\k\, G$ is the group algebra of a finite group $G$ over a base field $\k$, then the $n$-th indicator is given by
\begin{align}\label{IndG}
\nu_n(\k\, G)=\#\{g\in G\, |\, g^n=1_G\}\, \mod  \Chara\, \k.
\end{align}
Now suppose $G$ is a finite $p$-group. It is easy to see that $\nu_n(\k\, G)=1$ whenever $\text{gcd}(p,n)=1$. The difficulty lies in the case when $p\mid n$, where the actual value of $\nu_n(\k\, G)$ depends on the structure of the $p$-group $G$. But we can always show that $p\mid \nu_n(\k\, G)$ if $p \mid n$. So if we let $\Chara\, \k=p$, then $\nu_n(\k\, G)=1$ whenever $\text{gcd}(p,n)=1$ and $\nu_n(\k\, G)=0$ elsewhere. Another example of Hopf algebra over characteristic $p$ with the same sequence of indicators is the Radford algebra $R(p)$; see \cite[Theorem 3.3]{HH}. We refer to such a sequence as \emph{the $p$-pertinent sequence}; see Definition \ref{D: ppertinent}. These examples prompt us to study indicators in the modulo $p$ setting, or simply over a base field $\k$ of prime characteristic $p$.

In this short note, we study any finite-dimensional Hopf algebra having the local dual Chevalley property, which requires the coradical of the Hopf algebra to be a local Hopf subalgebra; see Definition \ref{DChev}. The group algebra $\k\, G$ and the Radford algebra $R(p)$ discussed above are examples of such Hopf algebras. Our main result, Theorem \ref{Main}, shows that the sequence of indicators of a finite-dimensional Hopf algebra having the local dual Chevalley property is the $p$-pertinent sequence. The approach of our proof is to reduce the problem firstly to the case of restricted universal enveloping algebras by applying the associated graded algebras in Lemma \ref{GrC} and then to the case of $p$-groups in Lemma \ref{RU}. We also show, in Lemma \ref{L:ppertinent}, that the minimal polynomial of the $p$-pertinent sequence is $f(x)=x^p-1$. These results particularly apply to any pointed Hopf algebras of dimension $p^n$ for some integer $n$; see Corollary \ref{C2}. Lastly, as noted in \cite{Shi}, the sequence of indicators can be extended uniquely to have negative terms. Our results then hold for the sequence of indicators with all integer indices.

\begin{Ack} This work started during the second author's visit in the math department at the University of Pittsburgh in March 2016. We would like to express our gratitude to the department.
\end{Ack}

\section{Preliminary}\label{S:1}
In this section, we will review some basic definitions and properties of Hopf algebras and their indicators.
Let $\k$ be a base field of arbitrary characteristic, and $H$ be a finite-dimensional Hopf algebra over $\k$.
The unadorned tensor $\otimes$ means $\otimes_\k$. We use the standard notation $(H,\,m,\,u,\,\Delta,\,\e,\,S)$, where $m: H\otimes H \to H$ is the multiplication map, $u: \k\to H$ is the unit map, $\Delta: H\to H\otimes H$ is the comultiplication map, $\e: H\to \k$ is the counit map, and $S: H\to H$ is the antipode. The vector space dual of $H$ is again a Hopf algebra and will be denoted by $H^*$. The bialgebra maps and the antipode of $H^*$ are given by $(m_{H^*},\,u_{H^*},\,\Delta_{H^*},\,\e_{H^*},\,S_{H^*})=(\Delta^*,\,\e^*,\,m^*,\,u^*,\,S^*)$, where $^*$ is the transpose of linear maps. We use the Sweedler's notation (sumless) $\Delta(h)=h_{(1)}\otimes h_{(2)}$ for any $h\in H$. In the dual Hopf algebra $H^*$, given $f,\,g\in H^*$, then $fg(h)=f\left(h_{(1)}\right)g\left(h_{(2)}\right)$ for any $h\in H$ and $\e_{H^*}(f)=f(1)$.

%\begin{dfn}\cite[Definition 2.1.1]{Mon}
%A left integral in $H$ is an element $\Lambda\in H$ such that $h\Lambda=\e(h)\Lambda$, for all $h\in H$; a right integral in $H$ is an element $\Lambda'\in H$ such that $\Lambda'h=\e(h)\Lambda'$ for all $h\in H$. The space of left integrals and the space of right integrals are denoted by $\int^l_H$ and $\int^r_H$, respectively.
%\end{dfn}

%\begin{prop}\cite[Theorem 2.1.3]{Mon} $\int^l_H$ and $\int^r_H$ are each one-dimensional.
%\end{prop}

%\begin{lem}\label{L:intDual}
%Suppose $\lambda\in H^*$. Then $\lambda$ is a left integral of $H^*$ if and only if $\sum h_{(1)}\lambda (h_{(2)})=\lambda (h)$ for any $h\in H$. Similar criterion holds for a right integral of $H^*$,i.e., $\lambda$ is a right integral of $H^*$ if and only if $\sum \lambda (h_{(1)})h_{(2)}=\lambda (h)$ for any $h\in H$.
%\end{lem}

%\begin{proof}
%By definition, $\lambda$ is a left integral in $H^*$ if and only if $f\lambda = \e_{H^*}(f) \lambda$ for any linear function $f\in H^*$. That is, $f\lambda(h)=\e_{H^*}(f)\lambda(h)$ for any $h\in H$. By duality, this is equivalent to $\sum f\big(h_{(1)}\big)\lambda \big(h_{(2)}\big)=f(1)\lambda(h)$ or  $ f\big(\sum h_{(1)}\lambda(h_{(2)})\big)=f\big(1\lambda(h)\big)$ since $f$ is linear. Note that $f$ is arbitrary in $H^*$. We have $\lambda$ is a left integral in $H^*$ if and only if $\sum h_{(1)}\lambda (h_{(2)})=\lambda (h)$ for any $h\in H$. The proof for right integrals is the same.
%\end{proof}

\begin{dfn}\cite[Definition 5.1.5, 5.2.1]{Mon}
\
\begin{itemize}
\item[(i)] The \emph{coradical} $H_0$ of $H$ is the sum of all simple subcoalgebras of $H$.
\item[(ii)] We say $H$ is \emph{connected} if $H_0$ is one-dimensional.
\item[(iii)] The \emph{coradical filtration} $\{H_n\}_{n\ge 0}$ of $H$ is defined inductively such that $H_n=\Delta^{-1}(H\otimes H_{n-1}+H_0\otimes H)$ for all integers $n\ge 1$. We use $\gr_CH=\bigoplus_{i\ge 0} H_{i}/H_{i-1}$ ($H_{-1}=0$) to denote the associated graded coalgebra with respect to the coradical filtration of $H$.
\item[(iv)] We denote by $\gr_JH:=\bigoplus_{i\ge 0} J^i/J^{i+1}$ ($J^0=H$) the associated graded algebra with respect to the $J$-adic filtration of $H$, where $J$ is the Jacobson radical of $H$.
\end{itemize}
%
%Note that the coradical filtration of $H$ is a Hopf algebra filtration if and only if $H_0$ is a Hopf subalgebra. In this case, the associated graded algebra $\gr H$ is a coradically graded Hopf algebra.
\end{dfn}

\begin{prop}\label{Gr}
Let $H$ be a finite-dimensional Hopf algebra. The following are equivalent.
\begin{itemize}
\item[(i)] $\gr_CH$ is a graded Hopf algebra.
\item[(ii)] $\gr_J (H^*)$ is a graded Hopf algebra.
\item[(iii)] $H_0$ is a Hopf subalgebra.
\item[(iv)] The Jacobson radical $J$ of $H^*$ is a Hopf ideal.
\end{itemize}
In these cases, we have $\gr_CH\cong (\gr_J(H^*))^*$ as graded Hopf algebras.
\end{prop}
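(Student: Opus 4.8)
The entire proposition rests on one duality between the two filtrations, which I would establish first. For the finite-dimensional Hopf algebra $H$, writing $(-)^{\perp}$ for the annihilator inside $H=(H^*)^*$, the claim is that the coradical filtration of $H$ is dual to the $J$-adic filtration of $H^*$, i.e.
\[
H_n=(J^{n+1})^{\perp}\qquad\text{for all }n\ge 0 .
\]
The base case $H_0=J^{\perp}$ holds because $H^*/J$ is semisimple, so its dual $(H^*/J)^*=J^{\perp}$ is the maximal cosemisimple subcoalgebra of $H$, which is exactly the coradical; the higher terms follow by matching the inductive definition of $\{H_n\}$ against the powers of $J$. Granting this, the finite-dimensional linear-algebra identity $U^{\perp}/W^{\perp}\cong (W/U)^*$ for $U\subseteq W$ yields
\[
H_n/H_{n-1}=(J^{n+1})^{\perp}/(J^n)^{\perp}\cong (J^n/J^{n+1})^* ,
\]
so that $\gr_CH\cong(\gr_J(H^*))^*$ as graded vector spaces. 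This identification is the backbone from which everything else is read off.

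Next I would dispatch the equivalence (iii)$\Leftrightarrow$(iv) formally, via the standard dictionary for a subspace $W\subseteq H$: $W$ is a subalgebra iff $W^{\perp}$ is a coideal of $H^*$; $W$ is a subcoalgebra iff $W^{\perp}$ is an ideal; and $W$ is $S$-stable iff $W^{\perp}$ is $S_{H^*}$-stable. Applying this to $W=H_0=J^{\perp}$, note that the coradical is automatically a subcoalgebra stable under $S$, while $J$ is automatically an ideal stable under $S_{H^*}$ (the antipode is an anti-automorphism and hence preserves the Jacobson radical). The only residual content is therefore: $H_0$ is a subalgebra iff $J$ is a coideal, which is precisely the assertion that $H_0$ is a Hopf subalgebra iff $J$ is a Hopf ideal.

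I would then prove (ii)$\Leftrightarrow$(iv) by a direct filtration computation, and (i)$\Leftrightarrow$(ii) by dualization. The algebra $\gr_J(H^*)$ always carries its induced graded algebra structure, and it is a graded bialgebra exactly when the $J$-adic filtration is a coalgebra filtration, $\Delta_{H^*}(J^n)\subseteq\sum_{i+j=n}J^i\otimes J^j$. If $J$ is a coideal, then expanding $\Delta_{H^*}(J^n)\subseteq(J\otimes H^*+H^*\otimes J)^n$ and using that $J$ is an ideal gives exactly this inclusion; conversely the degree-one part of a graded bialgebra structure forces $\Delta_{H^*}(J)\subseteq J\otimes H^*+H^*\otimes J$, and $\e_{H^*}(J)=0$ is automatic since $J$ lies in the kernel of the counit, so $J$ is a coideal. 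This gives (ii)$\Leftrightarrow$(iv). Finally, since each graded piece is finite-dimensional, the backbone isomorphism makes $\gr_CH$ the graded linear dual of $\gr_J(H^*)$, under which the induced multiplication and comultiplication on one side are the transposes of the comultiplication and multiplication on the other (because $m_H=(\Delta_{H^*})^*$ and $\Delta_H=(m_{H^*})^*$). Hence $\gr_CH$ is a graded bialgebra iff $\gr_J(H^*)$ is, giving (i)$\Leftrightarrow$(ii) and simultaneously upgrading the backbone to the claimed isomorphism $\gr_CH\cong(\gr_J(H^*))^*$ of graded Hopf algebras; the antipodes match since they are induced from $S$ and $S_{H^*}$. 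Chaining the three equivalences shows (i)--(iv) are all equivalent.

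The main obstacle is the first paragraph together with the transpose-compatibility in the last one. One must verify $H_n=(J^{n+1})^{\perp}$ at every degree, reconciling the two inductive definitions rather than just the coradical case, and then check through the annihilator identifications that the product $J^i/J^{i+1}\otimes J^j/J^{j+1}\to J^{i+j}/J^{i+j+1}$ transposes precisely to the degree-$(i,j)$ component of the comultiplication of $\gr_CH$, and symmetrically for the coproducts. Once this bookkeeping of filtration degrees is in place, the four equivalences and the final Hopf-algebra isomorphism are formal consequences of the annihilator dictionary and finite-dimensional duality.
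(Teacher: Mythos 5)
Your argument is correct, and it shares its backbone with the paper's proof: both rest on the duality between the coradical filtration of $H$ and the $J$-adic filtration of $H^*$, namely $H_n=(J^{n+1})^{\perp}$, equivalently $H_n\cong (H^*/J^{n+1})^*$, which the paper imports from \cite[Lemma 2.2 (ii)]{Wang2014} and uses to get $H_i/H_{i-1}\cong (J^i/J^{i+1})^*$ and hence (i)$\Leftrightarrow$(ii); your (iii)$\Leftrightarrow$(iv) via the annihilator dictionary is likewise the paper's ``similar since $H_0\cong H^*/J$.'' Where you genuinely diverge is in how the chain of equivalences is closed: the paper invokes \cite[Lemma 5.2.8]{Mon} as a black box for (i)$\Leftrightarrow$(iii), whereas you instead prove (ii)$\Leftrightarrow$(iv) by hand, expanding $\Delta_{H^*}(J^n)\subseteq (J\otimes H^*+H^*\otimes J)^n\subseteq\sum_{i+j=n}J^i\otimes J^j$ using that $J$ is an ideal, and reading off the converse from the degree-one compatibility. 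Your route buys a self-contained proof (no appeal to Montgomery's lemma, and the filtration computation on the algebra side is arguably more transparent than the corresponding wedge computation on the coalgebra side), at the cost of having to carry out the duality bookkeeping you flag at the end --- checking that the induced products and coproducts on the two associated graded objects are mutual transposes, and that $S$-stability of $H_0$ and $J$ is automatic. These are all standard and your sketches of them are sound, so the proposal stands as a valid, slightly more detailed alternative to the paper's citation-based argument.
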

\begin{proof}
(i)$\Leftrightarrow$ (iii) It follows from \cite[Lemma 5.2.8]{Mon}.

(i) $\Leftrightarrow$ (ii) We apply the natural isomorphism $H_i\cong (H^*/J^{i+1})^*$ \cite[Lemma 2.2 (ii)]{Wang2014} to obtain
\[
H_{i}/H_{i-1}\cong (H^*/J^{i+1})^*/(H^*/J^i)^*\cong (J^i/J^{i+1})^*
\]
for all $i\ge 0$. Thus
\[
(\gr_J(H^*))^*=\bigoplus_{i\ge 0}\, (J^{i}/J^{i+1})^*\cong \bigoplus_{i\ge 0}\, H_i/H_{i-1}=\gr_CH.
\]
So $\gr_CH$ is a graded Hopf algebra if and only if $(\gr_CH)^*$ is a graded Hopf algebra if and only if $\gr_J(H^*)$ is a graded Hopf algebra.

(iii)$\Leftrightarrow$(iv) It is similar since $H_0\cong H^*/J$ \cite[Lemma 2.2 (ii)]{Wang2014}.
\end{proof}

\begin{dfn}\cite[Definition 3.1]{Shi}\label{D:Sweedler}
Let $H$ be a Hopf algebra. For an integer $m$, the \emph{$m$-th Sweedler power map}
\[
P^{(m)}_H: H\to H,\quad h\mapsto h^{[m]}\quad (h\in H)
\]
is defined to be the $m$-th power of the identity map $\mathrm{id}_H: H\to H$ with respect to the convolution product. Thus, for all $h\in H$ and an integer $m\ge 1$,
\[
h^{[0]}=\epsilon(h)1_H,\quad h^{[m]}=h_{(1)}\cdots h_{(m)}\quad \text{and}\quad h^{[-m]}=S(h_{(1)})\cdots S(h_{(m)}).
\]
For any integer $n\in \mathbb Z$, we define the \emph{$n$-th indicator} of $H$ by
\[
\nu_n(H)=\mathrm{Tr}\, (S_H\circ P_H^{n-1}: H\to H),
\]
where $\mathrm{Tr}$ means the ordinary trace of a linear operator.
\end{dfn}

\begin{rem}
\
\begin{itemize}
\item[(i)] The indicator $\nu_n(H)$ is the same as in \cite[Definition 2.1]{KMN} for $n\ge 1$. One checks that $\nu_0(H)=\mathrm{Tr}(S^2_H)$, $\nu_1(H)=1$ and $\nu_2(H)=\mathrm{Tr}(S_H)$.

\item[(ii)] Let $H$ and $K$ be two finite-dimensional Hopf algebras over $\field$ such that the two representation categories $\text{Rep}(H)$ and $\text{Rep}(K)$ are monoidally equivalent. By \cite[Theorem 2.2]{NgSch},  $H\cong K^F$, where $K^F$ is a Drinfeld twist by a gauge transformation $F$ on $H$ which satisfies some $2$-cocycle conditions. Then $H$ and $K$ are said to be \emph{gauge equivalent} Hopf algebras. According to \cite[Theorem 2.2]{KMN} and \cite[Theorem 3.10]{Shi}, $\nu_n$ is a gauge invariant.
\end{itemize}
\end{rem}

Fix an algebraic closure $\overline{k}$ of $k$. For each integer $N\ge 1$, we define $\mathcal O_N$ to be the subring of $\overline{k}$ generated by the roots of the polynomial $X^N-1$. By definition, each element of $\mathcal O_N$ is of the form $z=a_1\omega_1+\cdots+a_m\omega_m$ for some $a_i\in \mathbb Z$ and some $\omega_i\in \overline{k}$ such that $\omega_i^N=1$. For such an element $z$, we set
\[
\overline{z}:=a_1\omega_1^{-1}+\cdots+a_m\omega_m^{-1}\in \mathcal O_N\quad \text{and}\quad |z|^2:=z\cdot \overline{z}.
\]
The assignment $z\mapsto \overline{z}$ is a well-defined ring automorphism of $\mathcal O_N$.

\begin{prop}\cite[Corollary 3.17]{Shi}\label{PIN}
Let $H$ and $H'$ be finite-dimensional Hopf algebras. Then, for all $n\in \mathbb Z$, we have the following:
\begin{itemize}
\item[(i)] $\nu_n(H^*)=\nu_n(H)$ and $\nu_n(H\otimes H')=\nu_n(H)\cdot \nu_n(H')$.
\item[(ii)] $\nu_n(H)\in k\cap \mathcal O_N$ for all $n\ge 1$, where $N=n\cdot \mathrm{ord}(S_H^2)$.
\item[(iii)] $\nu_n(H^{\mathrm{op}})=\nu_n(H^{\mathrm{cop}})=\overline{\nu_n(H)}$.
\item[(iv)] If $H$ is filtered, then $\nu_n(\gr H)=\nu_n(H)$.
\end{itemize}
\end{prop}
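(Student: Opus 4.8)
The plan is to reduce all four assertions to properties of the single operator $T_n^H:=S_H\circ P_H^{(n-1)}$ on $H$, together with three elementary facts about traces: cyclicity $\mathrm{Tr}(AB)=\mathrm{Tr}(BA)$, invariance under transpose $\mathrm{Tr}(f^{*})=\mathrm{Tr}(f)$, and multiplicativity $\mathrm{Tr}(A\otimes B)=\mathrm{Tr}(A)\,\mathrm{Tr}(B)$. For (i), I would first verify that transposition is a homomorphism of convolution algebras $(\mathrm{End}(H),*)\to(\mathrm{End}(H^{*}),*)$, i.e. $(f*g)^{*}=f^{*}*g^{*}$ under the identification $(H\otimes H)^{*}\cong H^{*}\otimes H^{*}$; since $\mathrm{id}_{H^{*}}=(\mathrm{id}_H)^{*}$ and $S_{H^{*}}=S_H^{*}$, this gives $P_{H^{*}}^{(m)}=(P_H^{(m)})^{*}$, so that $\nu_n(H^{*})=\mathrm{Tr}((P_H^{(n-1)}\circ S_H)^{*})=\mathrm{Tr}(P_H^{(n-1)}\circ S_H)=\mathrm{Tr}(S_H\circ P_H^{(n-1)})=\nu_n(H)$ by transpose-invariance and cyclicity. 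The tensor identity follows because the structure maps of $H\otimes H'$ factor through the flip, whence $P_{H\otimes H'}^{(m)}=P_H^{(m)}\otimes P_{H'}^{(m)}$ and $S_{H\otimes H'}=S_H\otimes S_{H'}$, and multiplicativity of the trace then applies.

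Part (iv) is the quickest: a Hopf filtration makes $S$, $m$, $\Delta$ filtered maps, hence $T_n^H$ is filtered with associated graded operator equal to $T_n^{\gr H}$; in a basis adapted to the filtration $T_n^H$ is block upper triangular with diagonal blocks the graded pieces, so $\nu_n(\gr H)=\mathrm{Tr}(T_n^{\gr H})=\mathrm{Tr}(T_n^H)=\nu_n(H)$. For (iii), I would first prove the clean equality $\nu_n(H^{\mathrm{op}})=\nu_n(H^{\mathrm{cop}})$: both antipodes equal $S_H^{-1}$, and a short Sweedler-notation computation shows the two power maps coincide, $P_{H^{\mathrm{op}}}^{(m)}(h)=P_{H^{\mathrm{cop}}}^{(m)}(h)=h_{(m)}\cdots h_{(1)}$, so that $T_n^{H^{\mathrm{op}}}=T_n^{H^{\mathrm{cop}}}$ literally. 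To identify this common value with $\overline{\nu_n(H)}$, I would pass to $\overline k$ and invoke part (ii): replacing $S_H$ by $S_H^{-1}$ and reversing the order of the Sweedler product inverts the nonzero eigenvalues of $T_n^H$ and fixes the zero ones, and inversion of roots of unity is exactly the involution $z\mapsto\overline z$ on $\mathcal O_N$; hence $\mathrm{Tr}(T_n^{H^{\mathrm{cop}}})=\overline{\mathrm{Tr}(T_n^H)}$.

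The integrality statement (ii) is the crux and the main obstacle. That $\nu_n(H)\in k$ is immediate, since $T_n^H$ is a $k$-linear endomorphism. The content is membership in $\mathcal O_N$ with $N=n\cdot\mathrm{ord}(S_H^2)$, which I would attack in two steps. First, over $\overline k$ every eigenvalue of $T_n^H$ should be $0$ or a root of unity—this rests on a finiteness statement for the operators assembled from the antipode (of finite order, by Radford) and the Sweedler powers—so that $\nu_n(H)\in\mathcal O_M$, where $M$ is the order of the semisimple part of $T_n^H$. Second, I would descend from $\mathcal O_M$ to $\mathcal O_N$ by a Galois argument, showing $\nu_n(H)$ is fixed by every $\sigma_a$ with $a\equiv 1\pmod N$; the point is that $\sigma_a(\nu_n(H))$ is again the trace of a Sweedler-type operator, and the congruences $a\equiv 1\pmod n$ and $a\equiv 1\pmod{\mathrm{ord}(S_H^2)}$ leave both the $(n-1)$-st Sweedler power and the $S^2$-twist unchanged, forcing $\sigma_a(\nu_n(H))=\nu_n(H)$. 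Establishing the root-of-unity dichotomy in the first step, and checking that the descent lands on exactly the modulus $N=n\cdot\mathrm{ord}(S_H^2)$ rather than something coarser, is where the real difficulty concentrates.
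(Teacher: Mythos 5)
The paper offers no proof of Proposition \ref{PIN}: it is imported verbatim from \cite[Corollary~3.17]{Shi} (with part (ii) going back to \cite{KMN}), so there is no in-paper argument to compare yours against; your proposal has to stand on its own. Parts (i) and (iv) of your argument are complete and correct: transposition is indeed a homomorphism of convolution algebras $(\End(H),*)\to(\End(H^*),*)$, giving $P^{(m)}_{H^*}=(P^{(m)}_H)^*$, and the block-triangularity argument for a Hopf-filtered $H$ is the standard and correct one. The first equality of (iii) is also correctly reduced to $P^{(m)}_{H^{\mathrm{op}}}=P^{(m)}_{H^{\mathrm{cop}}}=(h\mapsto h_{(m)}\cdots h_{(1)})$ together with $S_{H^{\mathrm{op}}}=S_{H^{\mathrm{cop}}}=S_H^{-1}$.

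The genuine gap is in (ii) and in the second half of (iii), and you have located it but not closed it. Two concrete problems. First, the ``eigenvalues are $0$ or roots of unity'' claim for $T_n=S_H\circ P_H^{(n-1)}$ does not follow from Radford's finiteness of $\mathrm{ord}(S_H^2)$ alone: the Sweedler powers $P^{(m)}$ are in general non-invertible and do not generate anything finite under composition, so some identity controlling the iterates of $T_n$ is indispensable and is nowhere supplied. Second, even granting that claim, the eigenvalues need not be $N$-th roots of unity, so no purely spectral argument can land in $\mathcal O_N$: for $H=\mathbb{C}\,C_5$ and $n=3$ the operator is $g\mapsto g^{-2}$, a $4$-cycle plus a fixed point, whose eigenvalues include $i\notin\mathcal O_3=\mathbb{Z}[\zeta_3]$ even though $N=3$; only the \emph{trace} lies in $\mathcal O_3$. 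Hence the Galois descent is not a routine second step --- it is the theorem --- and making ``$\sigma_a(\nu_n(H))$ is again the trace of a Sweedler-type operator, unchanged when $a\equiv 1 \pmod N$'' precise requires the identity expressing the $n$-th power of $S\circ P^{(n-1)}$ in terms of $S^2$ up to conjugation (the content of \cite[Prop.~2.6]{KMN} and \cite[Thms.~3.15--3.16]{Shi}), which your proposal presupposes rather than proves. The same unproved spectral assertion underlies your claim in (iii) that passing to $H^{\mathrm{op}}$ ``inverts the nonzero eigenvalues''; as written, (ii) and the tail of (iii) remain a plan rather than a proof.
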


Most of time, a sequence $\{a_n\}$ refers to a nonnegative indexed sequence $\{a_n\}_{n\geq 0}$. In the nature of this paper, we assume that a sequence $\{a_n\}$ also include the negatively indexed terms.

\begin{dfn}
A sequence $\{a_n\}$ is \emph{linearly recursive} if there exists a non-zero polynomial $f(x)=f_0+f_1x+f_{m-1}x^{m-1}+f_mx^m\in k[x]$ such that
\[
f_0a_n+f_1a_{n+1}+\cdots+f_ma_{m+n}=0,
\]
for any integer $n$. In such a case, we say that $\{a_n\}$ satisfies the polynomial $f(x)$. The monic polynomial of the least degree satisfied by a linearly recursive sequence is called \emph{the minimal polynomial} of that sequence.
\end{dfn}

Note that any periodic sequence $\{a_n\}$ is linearly recursive and  is uniquely determined by its positively indexed part, i.e., $\{a_n\}_{n\geq 1}$.

\begin{dfn}\label{D: ppertinent}
Let $p$ be a prime. We refer to a periodic sequence $\{a_n\}$ as \emph{the $p$-pertinent sequence} if
\[\{a_n\}_{n\geq 1}=\Big\{\underbrace{1, \ldots, 1}_{p-1}, 0, \underbrace{1, \ldots, 1}_{p-1}, 0, \ldots\Big\}.\]
\end{dfn}

\begin{lem}\label{L:ppertinent}
Let $p$ be a prime and $\Chara k=q$. i) The minimal polynomial of the $p$-pertinent sequence is
\[f(x)=
\begin{cases}
x^p-1 & \text{if}\ q\nmid p-1\\
x^{p-1}+x^{p-2}+\cdots +x+1 & \text{if}\ q\mid p-1.
\end{cases}
\]
ii) If $b_0=0$, $b_j=(-1)^{j+1}$ and $b_p=b_{p+1}=\ldots=0$, then the sequence
\[\Big\{B_n:=\sum_{j=0}^n {n \choose j} b_j\Big\}_{n\geq 1}\]
is the $p$-pertinent sequence.
\end{lem}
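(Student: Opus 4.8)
The plan is to handle the two parts separately, using throughout that the $p$-pertinent sequence $\{a_n\}$ satisfies $a_n=1$ when $p\nmid n$ and $a_n=0$ when $p\mid n$, so that any $p$ consecutive terms sum to $p-1$. For part (i), I would first note that periodicity gives $a_{n+p}=a_n$, so the sequence satisfies $x^p-1$ and its minimal polynomial $g(x)$ divides $x^p-1$. The key preliminary test is the divisor $f_2(x):=(x^p-1)/(x-1)=x^{p-1}+\cdots+x+1$: the sequence satisfies $f_2$ exactly when $\sum_{j=0}^{p-1}a_{n+j}=p-1$ vanishes in $k$, i.e. precisely when $q\mid p-1$. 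To turn this into an exact determination of $g$, I would split on the characteristic. When $q\neq p$ the polynomial $x^p-1$ is separable with the $p$-th roots of unity as roots, so I would diagonalize the shift by writing $a_n=\sum_{\zeta^p=1}c_\zeta\zeta^n$; a short discrete-Fourier computation gives $c_1=(p-1)/p$ and $c_\zeta=-1/p$ for $\zeta\neq1$. Since distinct geometric sequences $n\mapsto\zeta^n$ are linearly independent, $g(x)=\prod_{c_\zeta\neq0}(x-\zeta)$; as $-1/p\neq0$ always while $(p-1)/p=0$ iff $q\mid p-1$, this gives $f_2$ when $q\mid p-1$ and $x^p-1$ otherwise. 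Since $q\mid p-1$ forces $q\neq p$, this separable argument already covers the entire second branch.

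The remaining case is $q=p$, which lies in the branch $q\nmid p-1$ and where the Fourier argument breaks down because $x^p-1=(x-1)^p$ is inseparable. Here $g(x)=(x-1)^d$ for some $d\le p$, and I would compute the $(p-1)$-st finite difference by hand: writing $E$ for the shift and using $\binom{p-1}{j}\equiv(-1)^j\pmod p$, one finds
\[
\bigl((E-1)^{p-1}a\bigr)_n=\sum_{j=0}^{p-1}a_{n+j}=p-1\equiv-1\not\equiv 0 \pmod p,
\]
so $(x-1)^{p-1}$ is not satisfied, forcing $d=p$ and $g(x)=x^p-1$, as claimed.

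For part (ii), I would first simplify using $b_0=0$ and $b_j=0$ for $j\ge p$, so that $B_n=\sum_{j=1}^{p-1}(-1)^{j+1}\binom{n}{j}=1-\sum_{j=0}^{p-1}(-1)^{j}\binom{n}{j}$. Applying the standard alternating partial-sum identity $\sum_{j=0}^{m}(-1)^{j}\binom{n}{j}=(-1)^{m}\binom{n-1}{m}$ (proved by Pascal's rule and telescoping) with $m=p-1$ yields the closed form $B_n=1-(-1)^{p-1}\binom{n-1}{p-1}$. I would then reduce modulo $p$: one has $(-1)^{p-1}\equiv 1$, and by Lucas' theorem $\binom{n-1}{p-1}\equiv 1$ when $n\equiv 0\pmod p$ and $\equiv 0$ otherwise (the last base-$p$ digit of $n-1$ must equal $p-1$, forcing $p\mid n$). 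Hence $B_n\equiv 1-[p\mid n]=a_n$, where $[p\mid n]$ is $1$ if $p\mid n$ and $0$ otherwise, so $\{B_n\}_{n\ge1}$ is the $p$-pertinent sequence; since a periodic sequence is determined by its positively indexed part, matching for $n\ge1$ suffices.

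The main obstacle in part (i) is the minimality claim rather than mere divisibility: one must show $g$ equals the asserted polynomial and not some proper divisor of it. The separable case settles this cleanly through the nonvanishing of the Fourier coefficients $c_\zeta$, but the inseparable case $q=p$ genuinely requires the explicit finite-difference computation above, which is where I expect the only real subtlety. In part (ii) the sole point of content is the mod-$p$ evaluation of $\binom{n-1}{p-1}$, after which the identification with $\{a_n\}$ is immediate.
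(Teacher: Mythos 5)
Your proof is correct, but it takes a genuinely different route from the paper's on both parts. For part (i), the paper also starts from the observation that any $p$ consecutive terms sum to $p-1$, but it establishes minimality by pure linear algebra: assuming a relation of degree $\le p-2$, it writes down the $p\times(p-1)$ coefficient matrix (all $1$'s except for $0$'s where the index is divisible by $p$), reduces to a square matrix with $0$'s on the anti-diagonal and $1$'s elsewhere, and notes its determinant $\pm(p-2)$ is coprime to $p-1$, hence nonzero when $q\mid p-1$; the branch $q\nmid p-1$ is dismissed as ``similar.'' Your spectral argument --- diagonalizing the shift via $a_n=\sum_\zeta c_\zeta\zeta^n$ when $q\neq p$ and falling back on the finite-difference computation $(E-1)^{p-1}a\equiv -1$ when $q=p$ --- buys a uniform and fully explicit treatment of both branches (the paper's ``similarly'' in fact conceals that one must exclude \emph{all} divisors of $x^p-1$ of degree $\le p-1$, including $x^{p-1}+\cdots+1$ itself, which your computation of $c_1=(p-1)/p\neq 0$ handles in one line), at the mild cost of passing to $\overline{k}$ and invoking the standard description of the minimal polynomial of an exponential-sum sequence. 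For part (ii), the paper manipulates partial sums of the coefficients of $(1-x)^n$ and uses the characteristic-$p$ identity $(1-x)^{n+p}=(1-x)^n-x^p(1-x)^n$ to get period $p$; you instead derive the closed form $B_n=1-(-1)^{p-1}\binom{n-1}{p-1}$ and evaluate it by Lucas' theorem. Your version has the advantage of making explicit that the identification with the $p$-pertinent sequence holds only modulo $p$ (e.g.\ $B_{p+1}=1-p$ over $\mathbb{Z}$), a point the paper's proof glosses over by silently working in characteristic $p$.
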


\begin{proof}
i) Note that the sum of any $p$ consecutive terms of the sequence is always $p-1$. Then, when $q\mid p-1$, we see that the sequence satisfies the polynomial $x^{p-1}+x^{p-2}+\cdots +x+1$, and that $p>2$. Suppose that the sequence satisfies a polynomial of lower degree, say $g(x)=g_{p-2}x^{p-2}+ \cdots + g_1x_1+ g_0$ where $g_i \in k$. Then
\begin{align}\label{MPLRS}
A[g_0\, g_1\, \ldots\, g_{p-2}]^{\mathrm{T}}=0,
\end{align}
for the matrix $A=[a_{i,j}]_{p\times (p-1)}$, where
\[a_{i,j}=
\begin{cases}
0& \text{if}\ i=p-j+1\ \text{and}\ 1\leq j\leq p-1,\\
1 & \text{elsewhere}.
\end{cases}
\]
Eliminating the first row of the matrix $A$, we obtain a matrix $A'$ with 0's on the anti-diagonal and 1's elsewhere. The determinant of $A'$ is $-(p-2)$, not divisible by $q$. Therefore the only solution to the system corresponding to $A'$ is (0, \ldots,0). The system  \eqref{MPLRS} has one more equation $\sum_{i=0}^{p-2}g_i=0$ and so only has the solution (0, \ldots,0). Thus, when $q\mid p-1$,  no polynomial of degree less than $p-1$ can be satisfied by the sequence. Therefore, $x^{p-1}+x^{p-2}+\cdots +x+1$ is the minimal polynomial of the $p$-pertinent sequence when $q\mid p-1$. The case for $q\nmid p-1$ can be shown similarly.

ii) Note that \[B_n:=\sum_{j=0}^n {n \choose j} b_j
=\begin{cases}
\sum_{j=1}^n(-1)^{j+1}{n \choose j} & 1\leq n\leq p-1\\
\sum_{j=1}^{p-1}(-1)^{j+1}{n \choose j} & n\geq p.
\end{cases}
\]
Consider
\[(1-x)^n=\sum_{j=0}^n{n \choose j}(-1)^j x^j=:\alpha_0+\alpha_1 x+\ldots+ \alpha_n x^n.\]
Then we have
\[B_n=
\begin{cases}
-(\alpha_1+\ldots+\alpha_n)&  \text{for } 1\leq n \leq p-1\\
-(\alpha_1+\ldots+\alpha_{p-1}) &  \text{for } n \geq p.
\end{cases}
\]
Clearly $B_n=\alpha_0=1$ for $ 1\leq n \leq p-1$ and $B_p=\alpha_0+\alpha_p=0$. That is,
%(-1)^0{p \choose 0}+(-1)^p{p \choose p}=-(\alpha_1+\ldots+\alpha_{p-1})=0$.
\[\{B_n\}_{ 1\leq n\leq p}=\{1, \ldots, 1, 0 \}.\]
Now consider \[(1-x)^{n+p}=\sum_{j=0}^{n+p}{n+p \choose j}(-1)^j x^j=:\beta_0+\beta_1x+\ldots+\beta_n x^n+\ldots+\beta_{n+p}x^{n+p}.\]
Then
\[B_{n+p}=\sum_{j=1}^{p-1}(-1)^{j+1}{n+p \choose j}=-(\beta_1+\beta_2+\ldots+\beta_{p-1}).\]
Note that
\[(1-x)^{n+p}=(1-x)^n(1-x)^{p}=(1-x)^n-x^p(1-x)^n.\]
This implies that $\beta_{n+1}=\ldots=\beta_{p-1}=0$ if $n\leq p-1$ and that $\alpha_i=\beta_i$ for $1\leq i\leq n\leq p-1$ or $1\leq i \leq p-1 <n$.
%$(1-x)^{n+p}$ and $(1-x)^n$ share the same coefficients of the terms $x^i$
Thus, $B_{n+p}=B_n$ for any $n\geq 1$. Therefore, the sequence $\{B_n=\sum_{j=0}^n {n \choose j} b_j\}_{n\geq 1}$ is just the $p$-pertinent sequence.
\end{proof}

The following result is a restatement of \cite[Proposition 2.7]{KMN} and \cite[Corollary 4.6]{Shi}. We include the proof for the sake of completeness.

\begin{prop}
The sequence $\{\nu_n(H)\}_{n\in \mathbb Z}$ is linearly recursive and the degree of its minimal polynomial is at most $(\dim H)^2$. In particular when $\mathrm{char} (k)>0$, the sequence $\{\nu_n(H)\}_{n\in \mathbb Z}$ is periodic.
\end{prop}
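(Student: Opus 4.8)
The plan is to realize the entire indicator sequence as the image, under one fixed linear functional, of the powers of a single invertible element in a finite-dimensional associative algebra, and then to invoke the elementary theory of linearly recursive sequences.

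First I would work in the convolution algebra $A=(\End_k(H),*)$, where $(f*g)(h)=f(h_{(1)})g(h_{(2)})$; this is an associative $k$-algebra of dimension $(\dim H)^2$ with unit $u\circ\e$. By Definition \ref{D:Sweedler} the Sweedler power map $P_H^{(m)}$ is exactly the $m$-th convolution power $\mathrm{id}_H^{*m}$ of the identity map, and the antipode axiom says precisely that $\mathrm{id}_H$ is invertible in $A$ with inverse $S_H=\mathrm{id}_H^{*(-1)}$. Thus $\{P_H^{(m)}\}_{m\in\mathbb Z}$ is the two-sided family of powers of a single unit $\mathrm{id}_H\in A$.

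Next, let $f(x)=\sum_{i=0}^d c_i x^i\in k[x]$ be the minimal polynomial of $\mathrm{id}_H$ as an element of $A$; since $\dim_k A=(\dim H)^2$ we have $d\le (\dim H)^2$, and since $\mathrm{id}_H$ is a unit we have $c_0\neq 0$. Multiplying the relation $\sum_{i=0}^d c_i\,\mathrm{id}_H^{*i}=0$ by $\mathrm{id}_H^{*n}$, which is legitimate for every $n\in\mathbb Z$ because $\mathrm{id}_H$ is invertible, yields $\sum_{i=0}^d c_i\,P_H^{(n+i)}=0$ for all $n\in\mathbb Z$. Now I apply the $k$-linear functional $\phi\colon A\to k$, $\phi(g)=\mathrm{Tr}(S_H\circ g)$, and use $\nu_{m+1}(H)=\phi(P_H^{(m)})$ to obtain
\[
\sum_{i=0}^d c_i\,\nu_{n+i+1}(H)=0\qquad (n\in\mathbb Z).
\]
Hence $\{\nu_n(H)\}_{n\in\mathbb Z}$ is linearly recursive and satisfies $f(x)$, so the degree of its minimal polynomial is at most $d\le (\dim H)^2$.

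Finally, for the periodicity when $\Chara k=p>0$: by Proposition \ref{PIN}(ii) every $\nu_n(H)$ lies in some $\mathcal O_N\subseteq\overline{\mathbb F}_p$, hence is algebraic over $\mathbb F_p$. Let $\tilde f(x)$ be the minimal polynomial of the sequence $\{\nu_n(H)\}$; since the minimal polynomial of a sequence is unchanged under field extension, its coefficients lie in the subfield of $\overline{\mathbb F}_p$ generated by the values, and its constant term is nonzero because the two-sided shift is invertible. Consequently $\nu_1(H),\ldots,\nu_{\deg\tilde f}(H)$ together with the coefficients of $\tilde f$ generate a finite field $\mathbb F_q\subseteq\overline{\mathbb F}_p$, and running the recursion of $\tilde f$ both forward and backward confines every $\nu_n(H)$ to $\mathbb F_q$. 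A two-sided linearly recursive sequence over a finite field has finite state space and an invertible shift operator, so the shift permutes the states and has finite order; thus $\{\nu_n(H)\}_{n\in\mathbb Z}$ is periodic.

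I expect the main obstacle to be bookkeeping the interaction of the two products: the powers $P_H^{(m)}$ are formed with the convolution product $*$, whereas $\mathrm{Tr}(S_H\circ-)$ is taken with respect to ordinary composition of operators. The point that makes everything collapse is that $\phi$ is linear for the common underlying vector-space structure, so a recursion among the $*$-powers transfers verbatim to the indicators. A secondary subtlety is the passage to negative indices, which is exactly why the invertibility of $\mathrm{id}_H$ (equivalently $c_0\neq 0$, and the nonvanishing of the constant term of $\tilde f$) is needed, both to generate the two-sided recursion and to run it backward in the finiteness argument.
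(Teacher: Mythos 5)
Your argument for linear recursiveness is the same as the paper's: realize $P_H^{(m)}$ as the $m$-th convolution power of $\mathrm{id}_H$ in $A=\End_k(H)$, take the minimal polynomial of $\mathrm{id}_H$ (degree at most $\dim A=(\dim H)^2$, nonzero constant term since $\mathrm{id}_H$ is a unit with inverse $S_H$), and push the resulting two-sided recursion through the linear functional $\mathrm{Tr}(S_H\circ -)$. Where you genuinely diverge is the periodicity step. The paper imports periodicity of $\{\nu_n(H)\}_{n\ge 1}$ from \cite[Corollary 4.6]{Shi} and only uses $a_0\neq 0$ to propagate it to negative indices by running the recursion backward. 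You instead give a self-contained argument: Proposition \ref{PIN}(ii) places every $\nu_n(H)$ in $k\cap\mathcal O_N\subseteq\overline{\mathbb F}_p$, the minimal polynomial of the sequence is defined over the (finite) subfield generated by finitely many values, and a two-sided recursion with invertible shift over a finite field has a finite state space on which the shift acts as a permutation of finite order. This buys independence from Shimizu's Corollary 4.6 at the cost of invoking the integrality statement \ref{PIN}(ii) and the standard fact that the minimal polynomial of a sequence descends to the subfield containing its values; both versions are correct, and yours is arguably the more elementary closed loop.
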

\begin{proof}
We recall that $A=\mathrm{End}_k(H)$ is an associative algebra with respect to the convolution product, where the identity map $\mathrm{Id}$ and antipode $S$ are inverse to each other. By Definition \ref{D:Sweedler}, for any integer $n\in \mathbb Z$, the $n$-th power Sweedler map $P_H^{(n)}: H\to H$ can be expressed by the $n$-th power of $\mathrm{Id}$ in $A$. Since $A$ is finite-dimensional, we see that the sequence
\begin{align*}
P_H:=\{P_H^{(n)}\}_{n\in \mathbb Z}
\end{align*}
satisfies a minimal polynomial $\Phi_H(X)=a_mX^m+a_{m-1}X^{m-1}+\cdots a_0\in k[X]$ such that
\begin{align}\label{LRS}
a_mP_H^{(n+m)}+a_{m-1}P_H^{(n+m-1)}+\cdots a_0P_H^{(n)}=0\quad \text{in $A$ for all integers $n$}.
\end{align}
Next we take $\mathrm{Tr}(S\circ \eqref{LRS})$ to obtain
\begin{align}\label{LRSI}
a_m\nu_{n+m}(H)+a_{m-1}\nu_{n+m-1}(H)+\cdots a_0\nu_{n}(H)=0\quad \text{for all integers $n$}.
\end{align}
Moreover, one verifies that  $\mathrm{deg}\, \Phi_H\le \dim A=(\dim H)^2$. This shows the first part. Next we will only prove for the periodicity of the indicators in positive characteristic, and the linear recursive property follows from \cite[Proposition 2.7]{KMN} by the same argument. By \cite[Corollary 4.6]{Shi}, we know $\{\nu_n(H)\}_{n\ge 1}$ is periodic. It is important to point out that $a_0\neq 0$ in \eqref{LRSI} (otherwise the sequence satisfies $\Phi_H(X)/X$ which has smaller degree since $P_H^{(n)}$ is invertible in $A$). Hence one has
\begin{align*}
\nu_n(H)=-a_0^{-1}(a_m\nu_{n+m}(H)+a_{m-1}\nu_{n+m-1}(H)+\cdots a_1\nu_{n+1}(H))\quad \text{for all integers $n$}.
\end{align*}
An easy inductive argument yields that $\{\nu_n(H)\}_{n\in \mathbb Z}$ is periodic.
\end{proof}

\section{Indicators in positive characteristic}
In the remaining of this paper, $\k$ is a base field of prime characteristic $p$. We will study indicators of finite-dimensional Hopf algebras $H$ over $\k$.

\begin{dfn}\label{DChev}
\
\begin{itemize}
\item[(i)] We say that a Hopf algebra $H$ has the \emph{Chevalley property} if the Jacobson radical $J$ of $H$ is a Hopf ideal. Moreover, we say that $H$ has the \emph{connected Chevalley property} if additionally $H/J$ is a connected coalgebra.
\item[(ii)] By duality, we say that a Hopf algebra $H$ has the \emph{dual Chevalley property} if its coradical $H_0$ is a Hopf subalgebra. Additionally if $H_0$ is a local algebra then $H$ is said to have the \emph{local dual Chevalley property}.
\end{itemize}
\end{dfn}

\begin{prop}\label{Chev}
Let $H$ and $H^*$ be finite-dimensional Hopf algebras over $\k$ dual to each other. Then the following are equivalent.
\begin{itemize}
\item[(i)] $H$ has the local dual Chevalley property.
\item[(ii)] $H^*$ has the connected Chevalley property.
\item[(iii)] $H$ has the dual Chevalley property and the graded Hopf algebra $\gr_CH$ is local.
\item[(iv)] $H^*$ has the Chevalley property and the graded Hopf algebra $\gr_J(H^*)$ is connected.
\item[(v)] $H\otimes \overline{\k}$ is pointed and the group of its grouplike elements is a $p$-group.
\item[(vi)] $H^*\otimes  \overline{\k}$ is basic with $p^m$ nonisomorphic simple modules for some integer $m$.
\end{itemize}
In these cases, we have $\dim H=\dim H^*=p^n$ for some integer $n\ge 0$.
\end{prop}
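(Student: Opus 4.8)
The plan is to separate the six conditions into the "soft'' equivalences (i)--(iv), which are pure Hopf-duality and associated-graded bookkeeping, and the equivalences with (v), (vi), which require base change to $\overline{\k}$ and a structural identification of the coradical. Throughout I would use the natural isomorphism $H_0\cong (H^*/J)^*$ (equivalently $H_0^*\cong H^*/J$) from \cite[Lemma 2.2]{Wang2014}, Proposition \ref{Gr}, and the elementary duality that a finite-dimensional algebra $A$ with $A/\mathrm{rad}(A)=\k$ is local if and only if its dual coalgebra $A^*$ is connected.

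First I would close the cycle among (i)--(iv). By Proposition \ref{Gr} the various "Chevalley'' hypotheses ($H_0$ a Hopf subalgebra, $J\subseteq H^*$ a Hopf ideal, $\gr_C H$ and $\gr_J(H^*)$ graded Hopf algebras) are all equivalent and are built into each condition, so only the extra "local/connected'' clauses must be matched. For (i)$\Leftrightarrow$(ii): since $H_0$ is a Hopf algebra its augmentation ideal is maximal with quotient $\k$, so $H_0$ local forces $\mathrm{rad}(H_0)=H_0^+$ and residue field exactly $\k$; hence $H_0$ local $\Leftrightarrow H_0^*=H^*/J$ connected, which is the connectedness clause of (ii). For (i)$\Leftrightarrow$(iii): in $\gr_C H=\bigoplus_{i\ge 0}H_i/H_{i-1}$ the positive part is a nilpotent ideal, so $\mathrm{rad}(\gr_C H)$ is controlled by its degree-zero piece $H_0$, and $\gr_C H$ is local iff $H_0$ is. Finally (iii)$\Leftrightarrow$(iv) is immediate from $\gr_C H\cong (\gr_J(H^*))^*$ together with the local$\leftrightarrow$connected duality.

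Next I would treat (v) and (vi), which are dual to one another: $H\otimes\overline{\k}$ is pointed iff $H^*\otimes\overline{\k}$ is basic, and the grouplikes of $H\otimes\overline{\k}$ correspond bijectively to the one-dimensional simple modules of $H^*\otimes\overline{\k}$, so "$p$-group of grouplikes'' matches "$p^m$ nonisomorphic simples''; thus (v)$\Leftrightarrow$(vi). To link these with (i) I first argue that extension to $\overline{\k}$ is harmless: a local Hopf algebra has residue field $\k$, so $H_0\otimes\overline{\k}$ stays local; moreover $H_0$, being both the coradical and a Hopf subalgebra, is a cosemisimple Hopf algebra, and cosemisimplicity is detected by a normalized two-sided integral and hence survives field extension. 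Consequently $H^*/J$ is separable, $\mathrm{rad}(H^*\otimes\overline{\k})=J\otimes\overline{\k}$, and dually $(H\otimes\overline{\k})_0=H_0\otimes\overline{\k}=:\overline{H_0}$. Thus (i) passes to $\overline{\k}$, and it remains to prove, over $\overline{\k}$, that $\overline{H_0}$ is a local cosemisimple Hopf algebra if and only if $\overline{H_0}\cong \overline{\k}\,G$ for a finite $p$-group $G$ -- which is exactly (v). The content of this equivalence is the main obstacle. One direction is easy: $\overline{\k}\,G$ with $G$ a $p$-group is cosemisimple and, in characteristic $p$, local. For the converse, that the grouplikes form a $p$-group is quick, since $g-1\in H_0^+=\mathrm{rad}(\overline{H_0})$ is nilpotent and therefore $g^{p^k}=1$; the genuinely hard point is \emph{pointedness}, i.e.\ ruling out simple subcoalgebras of dimension $>1$. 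Dually this says that a finite-dimensional \emph{connected semisimple} Hopf algebra over $\overline{\k}$ in characteristic $p$ is commutative, forcing $\overline{H_0}^*=\overline{\k}^{\,G}$ to be a function algebra. I would approach this through the primitives $P(\overline{H_0}^*)$, which form a restricted Lie algebra whose elements generate Hopf subalgebras of $p$-power dimension that semisimplicity constrains to be toral, and otherwise cite the structure theory of cosemisimple Hopf algebras (finite group schemes) over an algebraically closed field; essentially all the work lies here.

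Finally, the dimension statement $\dim H=\dim H^*=p^n$ follows cleanly from the already-proven equivalence with (iv), \emph{independently} of the pointedness step: $\gr_J(H^*)$ is a finite-dimensional \emph{connected} Hopf algebra over a field of characteristic $p$, and such an algebra has $p$-power dimension. I would prove this by induction on the dimension, extracting a nonzero primitive (available whenever the coradical filtration is nontrivial), observing that it generates a finite-dimensional Hopf subalgebra of $p$-power dimension, and applying the Nichols--Zoeller freeness theorem to split off that factor. Since $\dim H=\dim H^*=\dim \gr_J(H^*)$, this yields $\dim H=p^n$ and completes the proposition.
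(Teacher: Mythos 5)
Your proposal is correct and follows essentially the same route as the paper: duality and associated-graded bookkeeping for (i)--(iv), base change to $\overline{\k}$ plus the structure theorem for connected semisimple Hopf algebras in characteristic $p$ for (v)--(vi), and the $p$-power dimension of connected Hopf algebras for the final claim. The only difference is that where you say ``cite the structure theory'' for the hard pointedness step the paper invokes Masuoka's semisimplicity criterion \cite[Theorem 0.1]{Ma}, and the dimension fact you propose to prove by Nichols--Zoeller induction is likewise simply cited from \cite[Proposition 2.2 (7)]{Wang}.
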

\begin{proof}
(i) $\Leftrightarrow$ (ii) We use the fact that $H_0$ is local if and only if $H^*/J$ is connected \cite[Lemma 2.2 (iii)]{Wang2014}. Thus it follows from Proposition \ref{Gr} (iii) $\Leftrightarrow$ (iv).

(iii) $\Leftrightarrow$ (iv) \& (v) $\Leftrightarrow$ (vi) They can be proved by the same argument.

(i) $\Rightarrow$ (iii) It suffices to show that $\gr_CH$ is local.  Let $I$ be the augmented ideal of $\gr_CH$. Then $I/(\gr_CH)_{\ge 1}\cong H_0^+$. Since $H_0$ is local we have $(H_0^+)^n=0$ for some integer $n$ by \cite[Lemma 2.2 (iii)]{Wang2014}. Moreover, the fact that $\gr_CH$ is a finite-dimensional graded algebra implies that $((\gr_CH)_{\ge 1})^m=0$ for some integer $m$. Hence $I^{nm}=((\gr_CH)_\ge 1)^m=0$. So $\gr_CH$ is local by \cite[Lemma 2.2 (iii)]{Wang2014} again. (iii) $\Rightarrow$ (i) It is clear since the Hopf subalgebra $H_0\subseteq \gr_CH$ is local by \cite[Lemma 2.8 (i)]{Wang2014}.

(i) $\Rightarrow$ (v) By \cite[Lemma 2.8 (ii), (iii)]{Wang2014}, one sees that $(H\otimes \overline{\k})_0=H_0\otimes \overline{\k}$ is a cosemisimple local Hopf algebra. Thus $(H\otimes \overline{\k})_0^*$ is a semisimple connected Hopf algebra over $\overline{\k}$. Hence \cite[Theorem 0.1]{Ma} implies that $(H\otimes \overline{\k})_0\cong \overline{\k}\, G$ for some finite $p$-group $G$.  (v) $\Rightarrow$ (i) It is clear that $(H\otimes \overline{\k})_0\cong \overline{\k}\, G$ is local since $G$ is a finite $p$-group. By \cite[Lemma 2.8 (i), (iii)]{Wang2014}, we know $H_0=H_0\otimes 1\subseteq (H\otimes \overline{\k})_0$ is a local Hopf subalgebra of $H$.

Finally, $\dim H=\dim H^*=\dim \gr_J H^*=p^n$ for some integer $n\ge 0$ by \cite[Proposition 2.2 (7)]{Wang}.
\end{proof}

\begin{prop}\label{Class}
The class of all finite-dimensional Hopf algebras over $\k$ with $p$-pertinent sequence of indicators is closed under taking vector space dual, tensor product, opposite algebra, opposite coalgebra, and associated graded algebra.
\end{prop}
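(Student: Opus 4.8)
The plan is to deduce all five closure properties directly from Proposition \ref{PIN}, using the single observation that a $p$-pertinent sequence is $\{0,1\}$-valued: by Definition \ref{D: ppertinent}, if $H$ has $p$-pertinent indicators then $\nu_n(H)=1$ when $p\nmid n$ and $\nu_n(H)=0$ when $p\mid n$, the negatively indexed terms being determined by periodicity. Consequently, to verify that a constructed Hopf algebra stays in the class it is enough to track what each operation does to the indicator sequence and to check that the values $0$ and $1$ are preserved.

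For the vector space dual and the associated graded algebra there is nothing to do beyond quoting Proposition \ref{PIN}: part (i) gives $\nu_n(H^*)=\nu_n(H)$ and part (iv) gives $\nu_n(\gr H)=\nu_n(H)$ for filtered $H$, so the indicator sequence is literally unchanged and hence still $p$-pertinent. For the opposite algebra and opposite coalgebra, Proposition \ref{PIN}(iii) gives $\nu_n(H^{\mathrm{op}})=\nu_n(H^{\mathrm{cop}})=\overline{\nu_n(H)}$, so the only thing to check is that the conjugation $z\mapsto\overline{z}$ on $\mathcal O_N$ fixes $0$ and $1$. This holds because the bar automorphism fixes $\mathbb Z$ pointwise: writing an integer $a$ as $a\cdot 1$ with $1$ a root of unity gives $\overline{a\cdot 1}=a\cdot 1^{-1}=a$. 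In particular $\overline{0}=0$ and $\overline{1}=1$, so $\{\overline{\nu_n(H)}\}$ coincides termwise with $\{\nu_n(H)\}$ and remains $p$-pertinent.

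The tensor product is the only case where two hypotheses interact. Proposition \ref{PIN}(i) gives $\nu_n(H\otimes H')=\nu_n(H)\cdot\nu_n(H')$, so assuming both $H$ and $H'$ have $p$-pertinent indicators I would multiply the two sequences termwise: when $p\nmid n$ both factors equal $1$ and the product is $1$, while when $p\mid n$ both factors equal $0$ and the product is $0$. Hence $\{\nu_n(H\otimes H')\}$ is $p$-pertinent. Equivalently, the set $\{0,1\}\subseteq\k$ is closed under multiplication and the two sequences vanish on exactly the same indices, so the product creates no new zeros and destroys none.

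I do not expect a genuine obstacle here: granted Proposition \ref{PIN}, the proposition reduces to bookkeeping, and the only step needing a moment's care is confirming that $z\mapsto\overline{z}$ fixes $0$ and $1$, which is precisely what keeps $H^{\mathrm{op}}$ and $H^{\mathrm{cop}}$ inside the class.
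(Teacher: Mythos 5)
Your proof is correct and follows the same route as the paper, which simply states that the proposition follows directly from Proposition \ref{PIN}; you have merely filled in the routine verifications (the bar automorphism fixing $0$ and $1$, and the termwise product of two $\{0,1\}$-valued sequences with identical zero sets) that the paper leaves implicit.
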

\begin{proof}
It directly follows from Proposition \ref{PIN}.
\end{proof}

\begin{lem}\label{RU}
Let $u(\mathfrak g)$ be the restricted universal enveloping algebra of some finite-dimensional restricted Lie algebra $\mathfrak g$ over $\k$.  Then the sequence $\big\{\nu_n(u(\mathfrak g))\big\}_{n\in \mathbb Z}$ is $p$-pertinent.
\end{lem}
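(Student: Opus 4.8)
The plan is to compute the indicator $\nu_n(u(\g))$ directly by exploiting the explicit basis of the restricted enveloping algebra coming from the PBW theorem, and then to recognize the resulting sum as an instance of the $p$-pertinent sequence via Lemma~\ref{L:ppertinent}(ii). The key structural fact is that $u(\g)$ is a connected Hopf algebra (its coradical is one-dimensional, since $u(\g)$ is local with $\g$ sitting inside the augmentation ideal as the space of primitives), so the antipode and the Sweedler powers act in a controlled, filtered way with respect to the coradical filtration.

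First I would reduce the trace computation to a combinatorial one. Since $\Chara\k=p$ and $u(\g)$ is local and connected, the antipode $S$ is unipotent and every Sweedler power map $P^{(m)}$ differs from a nilpotent shift by the identity-type behaviour forced by primitivity: on the degree-one piece $\g$ one has $\Delta(x)=x\otimes 1+1\otimes x$, so $P^{(m)}(x)=mx$ and $S(x)=-x$. The plan is to pick the PBW monomial basis $\{x_1^{a_1}\cdots x_t^{a_t} : 0\le a_i\le p-1\}$ attached to a basis $x_1,\dots,x_t$ of $\g$, and to observe that both $S$ and $P^{(m)}$ are filtered operators whose diagonal action on this basis (modulo lower-filtration terms) is what contributes to the trace. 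Concretely, on a monomial of multidegree $(a_1,\dots,a_t)$ the diagonal coefficient of $S\circ P^{(n-1)}$ should come out to $\prod_i$ of a per-variable factor, reducing the trace to a product of one-variable traces.

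Next I would carry out the one-variable computation, which by Proposition~\ref{PIN}(i) (multiplicativity under tensor product, applied to the graded/filtered pieces) and by Proposition~\ref{Class} reduces the whole problem to the divided-power or truncated-polynomial Hopf algebra $\k[x]/(x^p)$ with $x$ primitive. There the diagonal trace contribution of $S\circ P^{(n-1)}$ on the basis $1,x,\dots,x^{p-1}$ is governed by the binomial coefficients $\binom{n-1}{j}$ with alternating signs coming from $S$, which is exactly the combinatorial input $b_0=0$, $b_j=(-1)^{j+1}$, $b_p=b_{p+1}=\cdots=0$ of Lemma~\ref{L:ppertinent}(ii). Invoking that lemma identifies the one-variable indicator sequence with the $p$-pertinent sequence, and taking the tensor/product over the $t$ variables preserves $p$-pertinence (the product of $p$-pertinent sequences is again $p$-pertinent, since termwise $1\cdot 1=1$ and anything times $0$ is $0$). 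Extension to all integer indices $n\in\mathbb Z$ then follows from periodicity.

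The main obstacle I expect is the reduction in the second step: justifying rigorously that only the diagonal PBW-monomial contributions survive in the trace, and that the off-diagonal terms produced by the non-cocommutative part of $\Delta$ on $u(\g)$ (the nontrivial bracket and $p$-operation encoded in the divided-power corrections) are strictly lower in the coradical filtration and hence contribute nothing to $\mathrm{Tr}$. One clean way to dispose of this is to pass first to $\gr_C u(\g)$: by Proposition~\ref{PIN}(iv) we have $\nu_n(u(\g))=\nu_n(\gr_C u(\g))$, and $\gr_C u(\g)$ is a graded connected cocommutative Hopf algebra, i.e. (in characteristic $p$) a tensor product of the elementary Hopf algebras $\k[x]/(x^p)$ with $x$ primitive. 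This turns the filtered obstruction into an honest graded, cocommutative computation where the tensor-product decomposition and the one-variable calculation apply verbatim.
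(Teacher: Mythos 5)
Your proposal is essentially correct and shares the paper's key reduction: both pass to $\gr_C(u(\g))$, identify it (via the restricted PBW basis) with a tensor power of the one-variable Hopf algebra $\k[x]/(x^p)$ with $x$ primitive, and use multiplicativity of $\nu_n$ under tensor products to reduce to the one-variable case. Where you diverge is in how that one-variable indicator is evaluated: the paper observes $\k[x]/(x^p)\cong \gr_C\bigl(\k[x]/(x^p-x)\bigr)\cong \gr_C(\k^{C_p})$ and then quotes the group-algebra counting formula \eqref{IndG}, whereas you compute the trace directly. Your route is more self-contained (it needs neither the identification with $\gr_C(\k^{C_p})$ nor the formula \eqref{IndG}), and the diagonal action you anticipate is real: for $x$ primitive one gets $P^{(m)}(x^j)=m^jx^j$ by the multinomial theorem and $S(x^j)=(-1)^jx^j$, so
\[
\nu_n\bigl(\k[x]/(x^p)\bigr)=\sum_{j=0}^{p-1}(1-n)^j=n^{p-1}\quad\text{in }\k,
\]
which is $1$ if $p\nmid n$ and $0$ if $p\mid n$, as required. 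One inaccuracy to fix: the diagonal entries are the geometric terms $(1-n)^j$, not signed binomial coefficients $(-1)^j\binom{n-1}{j}$, so Lemma~\ref{L:ppertinent}(ii) with $b_j=(-1)^{j+1}$ does not literally describe this trace; you should either evaluate the geometric sum directly (Fermat's little theorem finishes it) or drop the appeal to that lemma. The remaining steps --- termwise products of $p$-pertinent sequences are $p$-pertinent, and periodicity extends the statement to all $n\in\mathbb Z$ --- are fine and match the paper's use of Proposition~\ref{Class}.
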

\begin{proof}
We consider the associated graded algebra $\gr_C (u(\mathfrak g))$ with respect to the coradical filtration of $u(\mathfrak g)$. One can verify easily using the restricted PBW basis of $u(\mathfrak g)$ that
\begin{align}\label{local}
\gr_C(u(\mathfrak g))\cong u(\mathfrak h),
\end{align}
where $\mathfrak h$ is a $p$-nilpotent abelian restricted Lie algebra such that $\dim \mathfrak h=\dim \mathfrak g=:d$. For the sake of convenience, we define
\[H(\delta):=\k[x]/(x^p-\delta x),\quad \text{with}\quad \delta=0,1,\]
where the coalgebra structure is given by $\Delta(x)=x\otimes 1+1\otimes x$. Thus \eqref{local} implies that $\gr_C (u(\mathfrak g))\cong H(0)^{\otimes d}$. Hence it reduces to prove that $\{\nu_n(H(0))\}_{n\ge 1}$ is $p$-pertinent by Proposition \ref{Class}. One further observes that
\[
H(0)\cong \gr_CH(1)\cong k^{C_p}.\]
Then it remains to show that $\{\nu_n(\k^{C_p})\}_{n\ge 1}$ or $\{\nu_n(\k\, C_p)\}_{n\ge 1}$ is $p$-pertinent. This has been already discussed by \eqref{IndG} as
\[\nu_n(\k\, C_p)=\#\{g\in C_p\, |\, g^n=1_G\}\, \mod p.\]
\end{proof}

\begin{lem}\label{GrC}
Let $H$ be a finite-dimensional connected Hopf algebra over $\k$. Then $(\gr_CH)^*\cong u(\mathfrak g)$ for some finite-dimensional restricted Lie algebra $\mathfrak g$ over $\k$.
\end{lem}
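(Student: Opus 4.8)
The plan is to identify $(\gr_C H)^*$ with the associated graded of the radical filtration of $H^*$, observe that it is generated by primitive elements, and then realize it as a restricted enveloping algebra via the universal property of $u(\mathfrak g)$.

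First I would set up the reduction. Since $H$ is connected, $H_0=\k\,1$ is a one-dimensional, hence local, Hopf subalgebra, so Proposition \ref{Gr} applies and gives an isomorphism of graded Hopf algebras $(\gr_C H)^*\cong \gr_J(H^*)=:B$, where $J$ is the Jacobson radical of $H^*$. From $H_0\cong H^*/J\cong \k$ we see that $H^*$ is local with $J$ its augmentation ideal; hence $B=\bigoplus_{n\ge 0}J^n/J^{n+1}$ satisfies $B_0=\k$, and since $J^{n+1}=J\cdot J^n$ it is generated as an algebra by its degree-one part $B_1=J/J^2$. Thus $B$ is a finite-dimensional graded Hopf algebra with $B_0=\k$ that is generated in degree one.

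Next I would extract the restricted Lie algebra. In a graded Hopf algebra with $B_0=\k$ the counit axiom forces each $x\in B_1$ to satisfy $\Delta(x)=x\otimes 1+1\otimes x$, so $B_1\subseteq \mathrm{Prim}(B)$. Set $\mathfrak g:=\mathrm{Prim}(B)$; it is finite-dimensional, contains $B_1$, and—because commutators of primitives are primitive and, as $\Chara\k=p$, $x^p$ is primitive whenever $x$ is—forms a restricted Lie subalgebra of $B$ under the commutator bracket and the $p$-th power map. The inclusion $\mathfrak g\hookrightarrow B$ is a morphism of restricted Lie algebras, so the universal property of $u(\mathfrak g)$ yields an algebra map $\phi\colon u(\mathfrak g)\to B$; since the generators $\mathfrak g$ are primitive on both sides, $\phi$ is in fact a morphism of Hopf algebras. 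As $B$ is generated by $B_1\subseteq\mathfrak g$, the map $\phi$ is surjective.

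The crux is the injectivity of $\phi$, and this is where I expect the real work to lie. Here I would use that $u(\mathfrak g)$ is a connected Hopf algebra with $\mathrm{Prim}(u(\mathfrak g))=\mathfrak g$, so that $\phi$ restricts on primitives to the inclusion $\mathfrak g\hookrightarrow B$ and hence is injective on $\k\,1\oplus\mathrm{Prim}(u(\mathfrak g))$, the first stage of the coradical filtration of $u(\mathfrak g)$. By the standard criterion that a coalgebra map out of a connected (pointed irreducible) coalgebra is injective as soon as it is injective on this first stage (see \cite{Mon}), $\phi$ is injective, and therefore an isomorphism, giving $(\gr_C H)^*\cong B\cong u(\mathfrak g)$. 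Everything before this step is bookkeeping, but the conclusion genuinely depends on $B$ being generated by primitives: for a general connected Hopf algebra in characteristic $p$ the canonical map from the enveloping algebra of its primitives need not be injective, and it is precisely the graded-local structure of $B$, forcing generation in degree one and hence by primitives, that excludes such divided-power phenomena. The two ingredients I would need to cite with care are the identification $\mathrm{Prim}(u(\mathfrak g))=\mathfrak g$ and the coalgebra injectivity criterion.
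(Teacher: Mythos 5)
Your proof is correct, but it follows a genuinely different route from the paper's. The paper invokes the structure theorem \cite[Theorem 3.1]{Wang} to get the explicit algebra structure $\gr_CH\cong \k[x_1,\dots,x_d]/(x_1^p,\dots,x_d^p)$ with $\dim H=p^d$; it then dualizes, notes that $(\gr_CH)^*$ is connected with $d$-dimensional primitive space and primitively generated, and the identification with $u(\mathfrak g)$ comes down to the dimension count $\dim u(\mathfrak g)=p^{\dim\mathfrak g}=p^d=\dim(\gr_CH)^*$. You avoid the structure theorem entirely: you get primitive generation from the elementary observation that the associated graded of the $J$-adic filtration of a local algebra is generated in degree one, and you replace the dimension count by the Heyneman--Radford injectivity criterion (\cite[Theorem 5.3.1]{Mon}) together with $\mathrm{Prim}(u(\mathfrak g))=\mathfrak g$. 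Your argument is more self-contained, needing only these two standard coalgebra facts rather than a classification-type input, while the paper's version is shorter given the citation and produces the dimension $p^d$ as a byproduct. One correction to your closing commentary: for an arbitrary connected Hopf algebra $H$ in characteristic $p$, the canonical map $u(\mathrm{Prim}(H))\to H$ is in fact \emph{always} injective---by exactly the Heyneman--Radford argument you gave, which nowhere uses that $B$ is primitively generated. What fails for general connected $H$ (e.g., divided power Hopf algebras) is \emph{surjectivity}; the graded-local structure of $B$ is what rescues surjectivity here, not injectivity.
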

\begin{proof}
It is well-known that $\gr_CH$ is a connected coradically graded Hopf algebra. By \cite[Theorem 3.1]{Wang}, the algebra structure of $\gr_CH$ is given as follows
\[
\gr_CH\cong \k[x_1,x_2,\dots,x_d]/(x_1^p,x_2^p,\dots,x_d^p),
\]
where $\dim H=p^d$ for some integer $d$. Then one sees that $\gr _CH$ is local and $\dim J/J^2=d$ where $J=(\gr_C H)_{\ge 1}$ is the Jacobson radical of $\gr_CH$. By duality again, one can conclude that $(\gr_CH)^*$ is connected, whose primitive space has dimension $d$. So $(\gr_CH)^*$ must be primitively generated and is isomorphic to $u(\mathfrak g)$ for some finite-dimensional restricted Lie algebra $\mathfrak g$.
\end{proof}

\begin{thm}\label{Main}
Let $H$ be any finite-dimensional Hopf algebra over $\k$ having the local dual Chevalley property or connected Chevalley property. Then the sequence $\{\nu_n(H)\}_{n\in \mathbb Z}$ is $p$-pertinent.
\end{thm}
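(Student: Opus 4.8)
The plan is to reduce the statement to the two building blocks already established, namely Lemma \ref{RU} and Lemma \ref{GrC}, together with the gauge/duality invariance collected in Proposition \ref{PIN} and Proposition \ref{Class}. First I would split into the two hypotheses. Since $\nu_n(H^*)=\nu_n(H)$ by Proposition \ref{PIN}(i) and since by Proposition \ref{Chev} the local dual Chevalley property for $H$ is equivalent to the connected Chevalley property for $H^*$, the two cases are interchangeable under dualization; it therefore suffices to treat, say, the local dual Chevalley case for $H$, and the other case follows by passing to $H^*$.

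Next I would pass to the associated graded Hopf algebra. By Proposition \ref{PIN}(iv) we have $\nu_n(\gr_C H)=\nu_n(H)$, so it suffices to prove the claim for $\gr_C H$. Under the local dual Chevalley hypothesis, Proposition \ref{Chev}(iii) guarantees that $\gr_C H$ is a local graded Hopf algebra whose coradical $H_0$ is a local Hopf subalgebra, and Proposition \ref{Chev} also gives $\dim H = p^n$. The essential point is that $\gr_C H$ decomposes as an extension built from a connected piece and the coradical $H_0$, where $H_0$ is itself a local cosemisimple Hopf algebra, hence (after base extension to $\overline{\k}$) dual to a $p$-group algebra as recorded in the proof of Proposition \ref{Chev}. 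I would extend scalars to $\overline{\k}$ harmlessly, since indicators are unchanged by base field extension and the $p$-pertinent pattern is characteristic-$p$ intrinsic.

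The heart of the argument is to identify the relevant graded algebra as a tensor-type combination of pieces already handled. For the connected part, Lemma \ref{GrC} shows $(\gr_C H)^*\cong u(\mathfrak g)$ for some finite-dimensional restricted Lie algebra $\mathfrak g$, and Lemma \ref{RU} shows the indicator sequence of any such restricted enveloping algebra is $p$-pertinent; combined with self-duality of indicators this settles the connected case outright. For the general local dual Chevalley case one must fold in the grouplike part: after base change, $\gr_C H \otimes \overline{\k}$ is pointed with grouplike group a finite $p$-group $G$ (Proposition \ref{Chev}(v)), and I would realize its indicators through the coradically graded structure so that, up to the closure properties of Proposition \ref{Class}, the sequence is a product of the $p$-group contribution $\nu_n(\overline{\k}\,G)$ and the connected contribution. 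Each factor is $p$-pertinent, and since the product of $p$-pertinent sequences is again $p$-pertinent (the pattern has value $0$ exactly when $p\mid n$, and this is preserved under multiplication), the conclusion follows.

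The step I expect to be the main obstacle is precisely this last identification: showing that the indicator sequence of the local graded Hopf algebra genuinely factors as the product of a $p$-group indicator and a restricted-enveloping-algebra indicator, rather than merely bounding or approximating it. The difficulty is that $\gr_C H$ need not literally be a tensor product $\overline{\k}\,G \otimes u(\mathfrak g)$; it is a priori only a smash or bosonization-type extension, so I would need to argue that for the purpose of computing $\mathrm{Tr}(S\circ P^{(n-1)})$ the relevant operator respects a filtration or grading whose associated graded object \emph{is} such a tensor product, allowing Proposition \ref{PIN}(i) and Proposition \ref{Class} to apply. Once the trace computation is shown to descend to the tensor-product model, the remaining verification that a product of $p$-pertinent sequences stays $p$-pertinent is routine.
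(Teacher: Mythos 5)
Your opening reduction (interchanging the two Chevalley properties via $\nu_n(H^*)=\nu_n(H)$ and Proposition \ref{Chev}) is correct, but you then choose to stay on the local dual Chevalley side and pass to $\gr_C H$, and this is exactly where the argument breaks down, as you yourself anticipate. In that case $\gr_C H\otimes\overline{\k}$ is a pointed coradically graded Hopf algebra whose grouplike group is a possibly nontrivial $p$-group $G$, i.e.\ a bosonization $R\#\overline{\k}\,G$ rather than a tensor product $\overline{\k}\,G\otimes u(\mathfrak g)$. Neither Proposition \ref{PIN}(i) nor Proposition \ref{Class} gives a factorization $\nu_n(R\#\overline{\k}\,G)=\nu_n(\overline{\k}\,G)\cdot \nu_n(R)$ for a smash product; no such multiplicativity is established anywhere in the paper, and your proposal offers no substitute (the ``filtration whose associated graded is the tensor product'' is not constructed). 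So the heart of the proof is missing, not merely deferred.

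The paper's proof sidesteps this obstacle entirely by making the opposite choice in your first reduction: it reduces to the \emph{connected Chevalley} case and then uses the $J$-adic filtration rather than the coradical filtration. If $H$ has the connected Chevalley property, then $\gr_J H\cong(\gr_C(H^*))^*$ is a \emph{connected} Hopf algebra (Propositions \ref{Gr} and \ref{Chev}), so there is no grouplike part left to handle: Lemma \ref{GrC} applies directly to the connected Hopf algebra $\gr_J H$ to give $(\gr_C(\gr_J H))^*\cong u(\mathfrak g)$, and then $\nu_n(H)=\nu_n(\gr_J H)=\nu_n(\gr_C(\gr_J H))=\nu_n\bigl((\gr_C(\gr_J H))^*\bigr)=\nu_n(u(\mathfrak g))$ by Proposition \ref{PIN}(i) and (iv), which is $p$-pertinent by Lemma \ref{RU}. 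In other words, the $p$-group that obstructs your route gets dualized into the connected coalgebra $(\overline{\k}\,G)^*$ and absorbed into the restricted enveloping algebra; the correct order of operations is to dualize \emph{before} taking associated graded objects, not after.
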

\begin{proof}
In view of Proposition \ref{Chev}, it suffices to assume that $H$ has the connected Chevalley property. Moreover, we can apply Lemma \ref{GrC} to obtain
\[
\nu_n(H)=\nu_n(\gr_JH)=\nu_n\left(\gr_C(\gr_J H)\right)=\nu_n\left((\gr_C(\gr_J H))^*\right)=\nu_n(u(\mathfrak g))
\]
for some finite-dimensional restricted Lie algebra $\mathfrak g$ over $\k$. Hence the $p$-pertinency follows from Lemma \ref{RU}. The minimal polynomial is derived from Lemma \ref{L:ppertinent}.
\end{proof}

\begin{rem}
Let $H$ be a finite-dimensional Hopf algebra with $\{\nu_n(H)\}_{n\geq 1}$ being the $p$-pertinent sequence. Let $b_j$'s as stated in Lemma \ref{L:ppertinent}. Then the constant sequences $c_j=\{c_j(n)=b_j\}_{n\geq 1}$ for $j=0,1, 2, \ldots$ are the unique series of sequences discussed in \cite[Theorem 4.4]{Shi} such that \[\nu_n(H)=\sum_{j=0}^{\infty}{n \choose j}c_j(n).\]
\end{rem}

\begin{cor}\label{C1}
Let $H$ or $H^*$ be a pointed finite-dimensional Hopf algebra over $\k$, where $G(H)$ is a $p$-group. Then the sequence $\big\{\nu_n(H)\big\}_{n\in \mathbb Z}$ is $p$-pertinent.
\end{cor}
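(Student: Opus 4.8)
The plan is to reduce Corollary \ref{C1} to Theorem \ref{Main} by showing that the pointedness hypothesis together with the $p$-group condition on $G(H)$ implies the local dual Chevalley property, at which point the $p$-pertinency of $\{\nu_n(H)\}_{n\in\mathbb Z}$ is immediate. First I would dispose of the two cases separately using the duality built into the earlier results. In the case where $H$ itself is pointed with $G(H)$ a $p$-group, I would recall that a pointed Hopf algebra is by definition one whose coradical $H_0$ equals the group algebra $\k\,G(H)$; this is automatically a Hopf subalgebra, so $H$ has the dual Chevalley property. The remaining point is that $H_0=\k\,G(H)$ is \emph{local}: since $G(H)$ is a finite $p$-group and $\Chara\k=p$, the group algebra $\k\,G(H)$ is a local algebra (its Jacobson radical is the augmentation ideal, and the only simple module is the trivial one). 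Hence $H$ has the local dual Chevalley property and Theorem \ref{Main} applies directly.

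For the case where $H^*$ is pointed with $G(H^*)$ a $p$-group, I would apply the same argument to $H^*$ to conclude that $H^*$ has the local dual Chevalley property, and therefore $\{\nu_n(H^*)\}_{n\in\mathbb Z}$ is $p$-pertinent by Theorem \ref{Main}. Then Proposition \ref{PIN}(i) gives $\nu_n(H)=\nu_n(H^*)$ for all $n$, so the sequence $\{\nu_n(H)\}_{n\in\mathbb Z}$ is $p$-pertinent as well. An alternative, cleaner route that handles both cases uniformly is to invoke the equivalence (i) $\Leftrightarrow$ (v) of Proposition \ref{Chev}, possibly after the base change to $\overline{\k}$: pointedness of $H$ over $\k$ with $G(H)$ a $p$-group matches condition (v) (noting that $H$ pointed forces $H\otimes\overline{\k}$ pointed with the same grouplikes), so $H$ has the local dual Chevalley property. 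Since the statement of Corollary \ref{C1} is phrased over $\k$ (not requiring $\k$ algebraically closed), I would be mildly careful to check that pointedness and the $p$-group hypothesis transfer correctly under the base change used in Proposition \ref{Chev}(v), but the indicators are unchanged by extension of scalars so no difficulty arises there.

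The main obstacle, such as it is, is the verification that $\k\,G$ is local when $G$ is a finite $p$-group over a field of characteristic $p$; this is a standard fact (the augmentation ideal is nilpotent because the augmentation ideal of $\k\,C_p$ is nilpotent and $G$ is built from $p$-cyclic pieces via a composition series, or equivalently $\k\,G$ has a unique simple module by a Clifford-theory/fixed-point argument), and it is exactly the content already used implicitly in Proposition \ref{Chev} via \cite[Lemma 2.8]{Wang2014} and \cite[Theorem 0.1]{Ma}. I would simply cite Proposition \ref{Chev} to encapsulate this rather than reprove it. Thus the entire corollary is a short deduction: identify the hypothesis with the local dual Chevalley property (on $H$ or on $H^*$), apply Theorem \ref{Main}, and, in the dual case, transport the conclusion back across $\nu_n(H)=\nu_n(H^*)$.
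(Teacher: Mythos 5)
Your proposal is correct and follows essentially the same route as the paper, which simply cites Proposition \ref{Chev} and Theorem \ref{Main}; you fill in the (standard) details that pointedness gives $H_0=\k\,G(H)$ and that $\k\,G$ is local for a finite $p$-group in characteristic $p$. The only cosmetic difference is that in the dual case you transport the conclusion via $\nu_n(H)=\nu_n(H^*)$, whereas one could equally note that Theorem \ref{Main} already covers $H$ directly through the connected Chevalley property via Proposition \ref{Chev}(i)$\Leftrightarrow$(ii).
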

\begin{proof}
It follows from Proposition \ref{Chev} and Theorem \ref{Main}.
\end{proof}

\begin{cor}\label{C2}
Let $H$ or $H^*$ be a pointed finite-dimensional Hopf algebra over $\k$ of dimension $p^n$ for some integer $n$. Then the sequence $\big\{\nu_n(H)\big\}_{n\in \mathbb Z}$ is $p$-pertinent.
\end{cor}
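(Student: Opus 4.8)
The plan is to reduce Corollary \ref{C2} to Corollary \ref{C1}; the only genuinely new input is that the dimension hypothesis forces the grouplikes to form a $p$-group.

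By Proposition \ref{PIN}(i) we have $\nu_n(H)=\nu_n(H^*)$ for every $n$, and $\dim H=\dim H^*=p^n$, so it does no harm to assume that the pointed algebra among $H,H^*$ is $H$ itself. Being pointed, $H$ has coradical $H_0=\k\,G(H)$, where $G(H)$ is the finite group of grouplike elements; this is a Hopf subalgebra of $H$.

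The crucial point is that $|G(H)|$ is a power of $p$. Since distinct grouplikes are linearly independent, one obtains for free only $|G(H)|\le \dim H=p^n$, which is not sufficient. Instead I would appeal to the Nichols--Zoeller freeness theorem: as $\k\,G(H)$ is a Hopf subalgebra, $H$ is free as a module over it, whence $|G(H)|=\dim \k\,G(H)$ divides $\dim H=p^n$. Consequently $|G(H)|=p^m$ for some $0\le m\le n$, so $G(H)$ is a $p$-group. This divisibility is the one nontrivial ingredient; everything else is formal.

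Once $G(H)$ is known to be a $p$-group, $H$ is a pointed finite-dimensional Hopf algebra whose grouplikes form a $p$-group, and Corollary \ref{C1} immediately gives that $\{\nu_n(H)\}_{n\in\mathbb Z}$ is $p$-pertinent. Alternatively, since $\k\,G(H)$ is then local, $H$ acquires the local dual Chevalley property and one may invoke Theorem \ref{Main} directly. Either way the conclusion for the original $H$ follows, using $\nu_n(H)=\nu_n(H^*)$ once more in case the pointed algebra was $H^*$.
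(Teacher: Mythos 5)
Your proposal is correct and matches the paper's proof: the paper likewise invokes the Nichols--Zoeller freeness theorem to conclude that $|G(H)|$ (or $|G(H^*)|$) divides $\dim H=p^n$, so the grouplikes form a $p$-group, and then applies Corollary \ref{C1}. The only difference is that you spell out the routine reduction via $\nu_n(H)=\nu_n(H^*)$, which the paper leaves implicit.
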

\begin{proof}
We can apply the previous corollary noting that $|G(H)|$ or $|G(H^*)|$ divides $\dim H=p^n$ by Nichols-Zoeller freeness theorem \cite{NZ}.
\end{proof}

\begin{rem}
\
\begin{itemize}
\item[(i)] The classification of pointed Hopf algebras of dimension $p^n$ for $n\le 3$ has been completed in \cite{NW, NWW, Wang, WW} over an algebraically closed field of prime characteristic $p$. By the result above, we know the indicators of these Hopf algebras are all $p$-pertinent.
\item[(ii)] For any finite-dimensional Hopf algebra $H$ that has the local dual Chevalley property or connected Chevalley property, one can use the same argument in this paper to show that
\[
\mathrm{Tr}(S^n)=0\quad \text{if}\ p=2\quad \text{or}\quad \mathrm{Tr}(S^n)=\begin{cases}
0  &  n\equiv 0  \pmod 2\\
1  &  n\equiv 1 \pmod 2
\end{cases}
\quad \text{if}\ p>2.
\]
\end{itemize}
\end{rem}

%%%%%%%%%%%%%%%%%%%%%%%%%%%%%%%%%%%%%%%%%%%%%%%%%%%%%%%%%%%

\end{document}